\documentclass[12pt]{amsart}

\usepackage{fullpage}
\usepackage{amsmath}
\usepackage{amssymb}

\newtheorem{theorem}{Theorem}
\newtheorem*{theorem*}{Theorem}
\newtheorem{lemma}{Lemma}
\newtheorem{proposition}{Proposition}
\newtheorem{corollary}{Corollary}
\theoremstyle{definition}
\newtheorem{definition}{Definition}
\theoremstyle{remark}
\newtheorem{remark}{Remark}
\newtheorem*{remark*}{Remark}
\newtheorem*{remarkn*}{Remark on Notation}

\newtheorem*{note*}{Note}

\begin{document}

\title{Fourier Series for Singular Measures}
\author{John E. Herr and Eric S. Weber}
\address{Department of Mathematics, Iowa State University, 396 Carver Hall, Ames, IA 50011}
\email{jherr@iastate.edu, esweber@iastate.edu}
\subjclass[2000]{Primary: 42A16, 42A38; Secondary 42C15, 28A80}
\date{\today}
\begin{abstract}
Using the Kaczmarz algorithm, we prove that for any singular Borel probability measure $\mu$ on $[0,1)$, every $f\in L^2(\mu)$ possesses a Fourier series of the form $f(x)=\sum_{n=0}^{\infty}c_ne^{2\pi inx}$.  We show that the coefficients $c_{n}$ can be computed in terms of the quantities $\widehat{f}(n) = \int_{0}^{1} f(x) e^{-2\pi i n x} d \mu(x)$.  We also demonstrate a Shannon-type sampling theorem for functions that are in a sense $\mu$-bandlimited.
\end{abstract}
\maketitle

\section{Introduction}
For a Borel probability measure $\mu$, a spectrum is a sequence $\{ \lambda_{n} \}_{n\in I}$ such that the functions $\{ e^{2 \pi i \lambda_{n} x} : n \in I \} \subset L^2(\mu)$ constitute an orthonormal basis.  If $\mu$ possesses a spectrum, we say $\mu$ is spectral, and then every $f \in L^2(\mu)$ possesses a (nonharmonic) Fourier series of the form $ f(x) = \sum_{n \in I} \langle f(x), e^{2 \pi i \lambda_{n} x} \rangle e^{2 \pi i \lambda_{n} x}$. 

In \cite{JP98}, Jorgensen and Pedersen considered the question of whether measures induced by iterated function systems on $\mathbb{R}^d$ are spectral.  Remarkably, they demonstrated that the quaternary Cantor measure $\mu_4$ is spectral.  Equally remarkably, they also showed that no three exponentials are orthogonal with respect to the ternary Cantor measure $\mu_3$, and hence $\mu_3$ is not spectral. The lack of a spectrum for $\mu_3$ motivated subsequent research to relax the orthogonality condition, instead searching for an exponential frame or Riesz basis, since an exponential frame would provide a Fourier series (see \cite{DS52}) similar to the spectral case. Though these searches have yielded partial results, it is still an open question whether $L^2(\mu_3)$ possesses an exponential frame. It is known that there exist singular measures without exponential frames. In fact,  Lai \cite{Lai12} showed that self-affine measures induced by iterated function systems with no overlap cannot possess exponential frames if the probability weights are not equal.

In this paper, we demonstrate that the Kaczmarz algorithm educes another potentially fruitful substitute for exponential spectra and exponential frames: the ``effective'' sequences  defined by Kwapie\'{n} and Mycielski \cite{KwMy01}. We show that the nonnegative integral exponentials in $L^2(\mu)$ for any singular Borel probability measure $\mu$ are such an effective sequence and that this effectivity allows us to define a Fourier series representation of any function in $L^2(\mu)$.  This recovers a result of Poltoratski\u{i} \cite{Pol93} concerning the normalized Cauchy transform.

\begin{definition}
A sequence $\{f_n\}_{n=0}^{\infty}$ in a Hilbert space $\mathbb{H}$ is said to be \textit{Bessel} if there exists a constant $B>0$ such that for any $x \in \mathbb{H}$, 
\begin{equation}\label{besselcond}
\sum_{n=0}^{\infty}\lvert\langle x,f_n\rangle\rvert^2\leq B\lVert x\rVert^2.
\end{equation}
This is equivalent to the existence of a constant $D>0$ such that
\begin{equation*}\left\lVert\sum_{n=0}^{K}c_nf_n\right\rVert\leq D\sqrt{\sum_{n=0}^{K}\lvert c_n\rvert^2}\end{equation*}
for any finite sequence $\{c_0,c_1,\ldots,c_K\}$ of complex numbers.
The sequence is called a \textit{frame} if in addition there exists a constant $A>0$ such that for any $x\in\mathbb{H}$,
\begin{equation}\label{framecond}A\lVert x\rVert^2\leq\sum_{n=0}^{\infty}\lvert\langle x,f_n\rangle\rvert^2\leq B\lVert x\rVert^2.\end{equation}
If $A=B$, then the frame is said to be \textit{tight}. If $A=B=1$, then $\{f_n\}_{n=0}^{\infty}$ is a \textit{Parseval frame}. The constant $A$ is called the \textit{lower frame bound} and the constant $B$ is called the \textit{upper frame bound} or \textit{Bessel bound}.
\end{definition}

\begin{definition}The \textit{Fourier-Stieltjes transform} of a finite Borel measure $\mu$ on $[0,1)$, denoted $\widehat{\mu}$, is defined by
\begin{equation*}\widehat{\mu}(x):=\int_{0}^{1}e^{-2\pi ixy}\,d\mu(y).\end{equation*}
\end{definition}

\subsection{Effective Sequences}
Let $\{\varphi_n\}_{n=0}^{\infty}$ be a linearly dense sequence of unit vectors in a Hilbert space $\mathbb{H}$. Given any element $x\in\mathbb{H}$, we may define a sequence $\{x_n\}_{n=0}^{\infty}$ in the following manner:
\begin{align*}x_0&=\langle x,\varphi_0\rangle \varphi_0\\
x_n&=x_{n-1}+\langle x-x_{n-1},\varphi_n\rangle \varphi_n.\end{align*}
If $\lim_{n\rightarrow\infty}\lVert x-x_n\rVert=0$ regardless of the choice of $x$, then the sequence $\{\varphi_n\}_{n=0}^{\infty}$ is said to be effective.

The above formula is known as the Kaczmarz algorithm. In 1937, Stefan Kaczmarz \cite{Kacz37} proved the effectivity of linearly dense periodic sequences in the finite-dimensional case. In 2001, these results were extended to infinite-dimensional Banach spaces under certain conditions by Kwapie\'{n} and Mycielski \cite{KwMy01}. These two also gave the following formula for the sequence $\{x_n\}_{n=0}^{\infty}$, which we state here for the Hilbert space setting: Define
\begin{align}\begin{split}\label{gs}g_0&=\varphi_0\\
g_n&=\varphi_n-\sum_{i=0}^{n-1}\langle \varphi_n,\varphi_i\rangle g_i.\end{split}\end{align}
Then
\begin{equation}\label{xnsum}x_n=\sum_{i=0}^{n}\langle x,g_i\rangle \varphi_i.\end{equation}
As shown by \cite{KwMy01}, and also more clearly for the Hilbert space setting by \cite{HalSzw05}, we have
\begin{equation*}\lVert x\rVert^2-\lim_{n\rightarrow\infty}\lVert x-x_n\rVert^2=\sum_{n=0}^{\infty}\lvert\langle x,g_n\rangle\rvert^2,\end{equation*}
from which it follows that $\{\varphi_n\}_{n=0}^{\infty}$ is effective if and only if 
\begin{equation}\label{gnframe}\sum_{n=0}^{\infty}\lvert\langle x,g_n\rangle\rvert^2=\lVert x\rVert^2.\end{equation}
That is to say, $\{\varphi_n\}_{n=0}^{\infty}$ is effective if and only if the associated sequence $\{g_n\}_{n=0}^{\infty}$ is a Parseval frame.

If $\{\varphi_n\}_{n=0}^{\infty}$ is effective, then $\eqref{xnsum}$ implies that for any $x\in \mathbb{H}$, $\sum_{i=0}^{\infty}\langle x,g_i\rangle \varphi_i$ converges to $x$ in norm, and as noted $\{g_n\}_{n=0}^{\infty}$ is a Parseval frame. This does not mean that $\{g_n\}_{n=0}^{\infty}$ and $\{\varphi_n\}_{n=0}^{\infty}$ are dual frames, since $\{\varphi_n\}_{n=0}^{\infty}$ need not even be a frame. However, $\{\varphi_n\}_{n=0}^{\infty}$ and $\{g_n\}_{n=0}^{\infty}$ are pseudo-dual in the following sense, first given by Li and Ogawa in \cite{LiOg01}:

\begin{definition}\label{pseudodef}
Let $\mathcal{H}$ be a separable Hilbert space. Two sequences $\{\varphi_n\}$ and $\{\varphi_n^\star\}$ in $\mathcal{H}$ form a pair of \textit{pseudoframes} for $\mathcal{H}$ if for all $x,y\in\mathcal{H}$, $\displaystyle\langle x,y\rangle=\sum_{n}\langle x,\varphi_n^\star\rangle\langle \varphi_n,y\rangle$.
\end{definition}
All frames are pseudoframes, but not the converse. Observe that if $x,y\in\mathbb{H}$ and $\{\varphi_n\}_{n=0}^{\infty}$ is effective, then
\begin{align*}\langle x,y\rangle&=\left\langle \sum_{m=0}^{\infty}\langle x,g_m\rangle \varphi_m,y\right\rangle\\
&=\sum_{m=0}^{\infty}\langle x,g_m\rangle\left\langle \varphi_m,y\right\rangle,
\end{align*}
and so $\{\varphi_n\}_{n=0}^{\infty}$ and $\{g_n\}_{n=0}^{\infty}$ are pseudo-dual. 

Of course, since $\{g_n\}_{n=0}^{\infty}$ is a Parseval frame, it is a true dual frame for itself.

\section{Main Results}

From this point forward, we shall use the notation $e_{\lambda}(x):=e^{2\pi i\lambda x}$. Our main result is as follows:

\begin{theorem}\label{mainthm}
If $\mu$ is a singular Borel probability measure on $[0,1)$, then the sequence $\{e_n\}_{n=0}^{\infty}$ is effective in $L^2(\mu)$. As a consequence, any element $f\in L^2(\mu)$ possesses a Fourier series $$f(x)=\sum_{n=0}^{\infty}c_n e^{2\pi inx},$$ where $$c_n=\int_{0}^{1}f(x)\overline{g_n(x)}\,d\mu(x)$$ and $\{g_n\}_{n=0}^{\infty}$ is the sequence associated to $\{e_n\}_{n=0}^{\infty}$ via Equation $\eqref{gs}$. The sum converges in norm, and Parseval's identity $\lVert f\rVert^2=\sum_{n=0}^{\infty}{\lvert c_n\rvert}^2$ holds.
\end{theorem}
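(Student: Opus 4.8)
The plan is to reduce everything to the criterion recorded around \eqref{gnframe}: the sequence $\{e_n\}_{n=0}^\infty$ is effective in $L^2(\mu)$ if and only if its associated sequence $\{g_n\}_{n=0}^\infty$ from \eqref{gs} is a Parseval frame, i.e. $\sum_{n=0}^\infty|\langle f,g_n\rangle|^2=\|f\|^2$ for every $f\in L^2(\mu)$. Granting effectivity, the remaining assertions come for free: \eqref{xnsum} gives $f=\sum_{n=0}^\infty\langle f,g_n\rangle e_n$ in norm, so $c_n=\langle f,g_n\rangle=\int_0^1 f\,\overline{g_n}\,d\mu$, and Parseval's identity is \eqref{gnframe} itself. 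Thus the whole theorem rests on showing that $\{g_n\}$ is a Parseval frame.

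First I would exploit the stationarity of $\{e_n\}$. Since $\mu$ is a probability measure each $e_n$ is a unit vector, and $\langle e_n,e_m\rangle=\int_0^1 e^{2\pi i(n-m)x}\,d\mu(x)$ depends only on $n-m$; writing $c_k:=\int_0^1 e^{2\pi ikx}\,d\mu$, the recursion \eqref{gs} becomes $g_n=e_n-\sum_{i=0}^{n-1}c_{n-i}g_i$. Substituting this into the generating function $G(x,z):=\sum_{n=0}^\infty g_n(x)z^n$ turns the convolution on the right into a product, which I expect to solve in closed form as
\[
G(x,z)=\frac{1}{(1-ze^{2\pi ix})\,F_\mu(z)},\qquad F_\mu(z):=\int_0^1\frac{d\mu(t)}{1-ze^{2\pi it}}=\sum_{k=0}^\infty c_k z^k,
\]
where $F_\mu$ is the Cauchy transform of $\mu$.

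Pairing $G$ against $f$ and reading off $\langle f,g_n\rangle=\int_0^1 f\,\overline{g_n}\,d\mu$ as Taylor coefficients, a short conjugation then identifies the generating function of the Fourier coefficients with the normalized Cauchy transform of $f$,
\[
\sum_{n=0}^\infty\langle f,g_n\rangle w^n=\frac{\displaystyle\int_0^1\frac{f(x)\,d\mu(x)}{1-we^{-2\pi ix}}}{\displaystyle\int_0^1\frac{d\mu(t)}{1-we^{-2\pi it}}}=:(\mathcal Vf)(w),\qquad|w|<1.
\]
Since $\sum_{n=0}^\infty|\langle f,g_n\rangle|^2=\|\mathcal Vf\|_{H^2}^2$ by the definition of the Hardy-space norm through Taylor coefficients, the Parseval identity is exactly the statement that $\mathcal V$ is an isometry from $L^2(\mu)$ into $H^2$. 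Two soft reductions help here: the energy identity stated just before \eqref{gnframe} gives $\sum_n|\langle f,g_n\rangle|^2\le\|f\|^2$, so $\{g_n\}$ is automatically Bessel with bound $1$ and it suffices to prove equality on a dense subset; and because $\mu$ is singular it fails the Szeg\H{o} condition, so the analytic polynomials are dense in $L^2(\mu)$ and one may test on the individual $e_m$.

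The hard part is precisely the isometry of $\mathcal V$, which is where the singularity of $\mu$ is indispensable and which is the content of Poltoratski\u{i}'s theorem cited in the introduction. The mechanism I would pursue is boundary behavior: $\sum_n|\langle f,g_n\rangle|^2=\lim_{r\to1^-}\int_{\mathbb T}|(\mathcal Vf)(r\zeta)|^2\,dm(\zeta)$, and for singular $\mu$ the Poisson integral $\mathrm{Re}\,F_\mu(r\zeta)$ tends to $0$ for $m$-almost every $\zeta$ while concentrating its mass $\mu$-almost everywhere. This dichotomy should force the circular averages defining $\|\mathcal Vf\|_{H^2}^2$ to localize on $\mu$ and reproduce $\int_0^1|f|^2\,d\mu$. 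Controlling the ratio defining $\mathcal Vf$ near the unit circle and justifying the interchange of limit and integral is the genuine obstacle; once that is in hand, the frame-theoretic reductions above finish the proof.
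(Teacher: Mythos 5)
Your reductions are sound, and your route is genuinely different from the paper's: you aim to prove the Parseval identity $\sum_{n}\lvert\langle f,g_n\rangle\rvert^2=\lVert f\rVert^2$ directly, by identifying the generating function $\sum_{n}\langle f,g_n\rangle w^n$ with the normalized Cauchy transform $V_\mu f(w)$. That identification is correct (it is precisely Proposition \ref{P:mainprop} of the paper, though your closed form for $G(x,z)$ has a conjugation slip: the analytic factor is $\sum_k\overline{\alpha_k}z^k$, not $1/F_\mu(z)$ itself; the conjugates cancel once you pair against $f$), and the Bessel bound $1$ you extract from the energy identity preceding \eqref{gnframe} is also correct. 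One repairable flaw in the reduction: knowing $\lVert V_\mu e_m\rVert_{H^2}=\lVert e_m\rVert$ for each individual $m$ does not give isometry on the span of the $e_m$; you need the further observation that for a contraction $T$ the set $\{x:\lVert Tx\rVert=\lVert x\rVert\}$ equals $\ker(I-T^*T)$ and is therefore a closed subspace, or else you must verify the identity on all analytic polynomials, not just the exponentials one at a time.

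The genuine gap is that the central step --- the isometry of $V_\mu$ from $L^2(\mu)$ into $H^2$ --- is never proved. Your Poisson-kernel ``dichotomy'' is a heuristic, and you concede yourself that controlling the ratio near the circle and interchanging limit and integral ``is the genuine obstacle.'' That obstacle is not a technicality: it is the entire content of the Clark--Poltoratski\u{i} theory, so what you have written is a reduction of Theorem \ref{mainthm} to Poltoratski\u{i}'s theorem, not a proof of it. (Even citing \cite{Pol93} and \cite{Clark72} as a black box requires one more fact you gloss over: Poltoratski\u{i}'s unitarity lands in the de Branges--Rovnyak space $\mathcal{H}(b)$, whose norm coincides with the $H^2$ norm only because $b$ is inner when $\mu$ is singular.) The paper closes this step completely differently and far more cheaply: $\{e_n\}_{n=0}^{\infty}$ is a stationary sequence of unit vectors whose spectral measure is $\mu$ itself, it is linearly dense (by F.\ and M.\ Riesz; your Kolmogorov--Krein/Szeg\H{o} argument for density is a valid substitute), and the Kwapie\'{n}--Mycielski theorem quoted in the paper says that such a sequence is effective exactly when its spectral measure is Lebesgue or singular. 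Since you already have stationarity and density in hand, invoking that theorem from \cite{KwMy01} is the direct way to complete your argument without any Hardy-space analysis; the normalized Cauchy transform route remains available, but only at the price of importing the full strength of \cite{Pol93}.
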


Our proof proceeds in a series of lemmas. First, in order to show completeness of $\{e_n\}_{n=0}^{\infty}$, we appeal to the well-known theorem of Frigyes and Marcel Riesz \cite{Riesz16}:

\begin{theorem*}[F. and M. Riesz]
Let $\mu$ be a complex Borel measure on $[0,1)$. If $$\int_{0}^{1}e^{2\pi inx}\,d\mu(x)=0$$ for all $n\in\mathbb{N}$, then $\mu$ is absolutely continuous with respect to Lebesgue measure.
\end{theorem*}
From this theorem, we prove the desired lemma:
\begin{lemma}\label{spanlem}
If $\mu$ is a singular Borel measure on $[0,1)$, then  $\{e_n\}_{n=0}^{\infty}$ is linearly dense in $L^2(\mu)$.
\end{lemma}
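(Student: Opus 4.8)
The plan is to establish linear density by showing that $\{e_n\}_{n=0}^{\infty}$ has trivial orthogonal complement in $L^2(\mu)$; since $L^2(\mu)$ is a Hilbert space, the vanishing of the orthogonal complement is equivalent to the closed linear span being all of $L^2(\mu)$, which is exactly linear density. So I would begin by supposing that $f\in L^2(\mu)$ satisfies $\langle f,e_n\rangle=0$ for every $n\geq 0$, with the goal of deducing that $f=0$ in $L^2(\mu)$.

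The next step is to package the orthogonality relations as the vanishing of Fourier--Stieltjes coefficients of a suitable measure. Since $\mu$ is a probability (hence finite) measure, Cauchy--Schwarz gives $L^2(\mu)\subseteq L^1(\mu)$, so $f\in L^1(\mu)$ and $d\nu:=\overline{f}\,d\mu$ is a genuine finite complex Borel measure on $[0,1)$. The hypothesis reads
$$0=\langle f,e_n\rangle=\int_0^1 f(x)e^{-2\pi i nx}\,d\mu(x),\qquad n=0,1,2,\ldots,$$
and taking complex conjugates yields $\int_0^1 e^{2\pi i nx}\,d\nu(x)=0$ for all $n\geq 0$, in particular for all $n\in\mathbb{N}$, which is precisely the hypothesis of the theorem of F. and M. Riesz.

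I would then invoke that theorem: the vanishing of these coefficients forces $\nu$ to be absolutely continuous with respect to Lebesgue measure. But by construction $\nu\ll\mu$ (indeed $|\nu|(E)=\int_E|f|\,d\mu$), and $\mu$ is singular with respect to Lebesgue measure, so $\nu$ is singular with respect to Lebesgue measure as well. A measure that is simultaneously absolutely continuous and singular with respect to Lebesgue measure must be the zero measure, whence $\nu=0$, i.e. $\overline{f}=0$ $\mu$-almost everywhere and $f=0$ in $L^2(\mu)$.

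There is no deep obstacle here, as essentially all of the content is supplied by the cited F. and M. Riesz theorem; the conceptual crux is simply the collision between $\nu\ll$ Lebesgue (from Riesz) and $\nu\perp$ Lebesgue (inherited from the singularity of $\mu$), which annihilates $f$. The only points that require genuine care are bookkeeping: first, confirming $f\in L^1(\mu)$ so that $\overline{f}\,d\mu$ is a finite complex measure to which the theorem legitimately applies, and second, tracking the complex conjugation and the sign of the frequency so that the vanishing coefficients line up with the hypothesis of the theorem as stated (for $e^{2\pi i nx}$ with $n\in\mathbb{N}$). Because the orthogonality holds for all $n\geq 0$, there is in fact margin to spare regardless of whether $\mathbb{N}$ is taken to include $0$.
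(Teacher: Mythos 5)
Your proof is correct and follows essentially the same route as the paper: both apply the F.\ and M.\ Riesz theorem to the complex measure $\overline{f}\,d\mu$ and then use the clash between absolute continuity (from Riesz) and singularity (inherited from $\mu$) to conclude $f=0$ $\mu$-a.e. The only cosmetic difference is that the paper phrases this as a proof by contradiction while you argue directly that the orthogonal complement is trivial, and you additionally spell out the $L^2(\mu)\subseteq L^1(\mu)$ bookkeeping that the paper leaves implicit.
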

\begin{proof}Assume, for the sake of contradiction, that $\overline{\text{span}}(\{e_n\}_{n=0}^{\infty})\neq L^2(\mu)$. Then there exists some $f\in L^2(\mu)$ such that $f\in\overline{\text{span}}(\{e_n\}_{n=0}^{\infty})^\perp$. Then for any $n\in\mathbb{N}$, we have
\begin{equation*}\int_{0}^{1}e^{2\pi inx}\overline{f(x)}\,d\mu(x)=0.
\end{equation*}
By the F. and M. Riesz Theorem, this implies that $\overline{f}d\mu$ is absolutely continuous with respect to Lebesgue measure $d\lambda$. Since $\overline{f}d\mu<<d\lambda$ and $\overline{f}d\mu\perp d\lambda$, it follows by uniqueness in Lebesgue's Decomposition Theorem that $\overline{f}d\mu\equiv0$. Thus, $f=0$ almost everywhere with respect to $\mu$, which is a contradiction. Therefore, $\overline{\text{span}}(\{e_n\}_{n=0}^{\infty})=L^2(\mu)$.
\end{proof}

\begin{definition}[Stationary Sequences]\label{stationdef}
A sequence $\{\varphi_k\}_{k=0}^{\infty}$ in a Hilbert space is said to be \textit{stationary} if $\langle \varphi_{k+m},\varphi_{l+m}\rangle=\langle \varphi_k,\varphi_l\rangle$ for any nonnegative integers $k$, $l$, and $m$.
\end{definition}
As noted in \cite{KwMy01}, given a stationary sequence $\{\varphi_n\}_{n=0}^{\infty}$ and $a_m$ defined by $a_m:=\langle \varphi_k,\varphi_{k+m}\rangle$, where $k$ is any nonnegative integer $k\geq-m$, Bochner's Theorem implies the existence of a unique positive measure $\sigma$ on $\mathbb{T}$ such that
\begin{equation*}a_m=\int_{\mathbb{T}}\overline{z}^m\sigma(dz)=\int_{0}^{1}e^{-2\pi imx}\,d\sigma(x)\hspace{.5cm}\text{for each }m\in\mathbb{Z}.\end{equation*}
This measure $\sigma$ is called the \textit{spectral measure} of the stationary sequence $\{\varphi_n\}$.

We shall make use of the following theorem from \cite{KwMy01}:

\begin{theorem*}[Kwapie\'{n} and Mycielski]\label{kwmythm}
A stationary sequence of unit vectors that is linearly dense in a Hilbert space is effective if and only if its spectral measure either coincides with the normalized Lebesgue measure or is singular with respect to Lebesgue measure.
\end{theorem*}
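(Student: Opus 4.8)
The plan is to strip the statement down to a concrete computation in $L^2(\sigma)$ and then read off effectivity from the boundary behaviour of a single holomorphic function, the normalized Cauchy transform of $f\,d\sigma$. First I would pass to the spectral model. Since $\{\varphi_n\}$ has unit vectors, $\sigma$ is a probability measure, and stationarity gives $\langle\varphi_k,\varphi_l\rangle=a_{l-k}=\int_0^1 e^{-2\pi i(l-k)x}\,d\sigma(x)=\langle e_k,e_l\rangle_{L^2(\sigma)}$. Hence the densely defined map $U\varphi_n=e_n$ preserves all inner products and extends to a unitary isomorphism of $\mathbb{H}$ onto $\overline{\text{span}}\{e_n:n\ge0\}\subseteq L^2(\sigma)$, the linear density of $\{\varphi_n\}$ making it onto. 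The Kaczmarz data $\{g_n\}$ and effectivity are defined purely through inner products, so they are unitary invariants; it therefore suffices to decide effectivity of $\{e_n\}_{n=0}^\infty$ in $L^2(\sigma)$. By the identity of Kwapie\'{n} and Mycielski recalled before $\eqref{gnframe}$, this is equivalent to $\sum_n|\langle f,g_n\rangle|^2=\|f\|_{L^2(\sigma)}^2$ for every $f$, and since the left side never exceeds $\|f\|^2$, only the lower bound is in question.

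Next I would identify the coefficients $\langle f,g_n\rangle$ analytically. Writing $m_k=\int_0^1 e^{-2\pi i kx}\,d\sigma$, and pairing the recursion $\eqref{gs}$ (with $\varphi_n=e_n$) against $f$, one gets the convolution relation $\langle f,e_n\rangle=\sum_{i=0}^n m_{n-i}\langle f,g_i\rangle$. In terms of the generating functions $F(z)=\sum_{n\ge0}m_nz^n=\int_0^1\frac{d\sigma(x)}{1-ze^{-2\pi ix}}$ and $\Phi_f(z)=\sum_{n\ge0}\langle f,e_n\rangle z^n=\int_0^1\frac{f(x)\,d\sigma(x)}{1-ze^{-2\pi ix}}$, this says that $\langle f,g_n\rangle$ are the Taylor coefficients of $\Psi_f:=\Phi_f/F$, the normalized Cauchy transform of $f\,d\sigma$. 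Because $\mathrm{Re}\,F(z)>0$ on the disk, $F$ is zero-free and $\Psi_f$ is holomorphic there, so $\sum_n|\langle f,g_n\rangle|^2=\lim_{r\to1^-}\int_0^1|\Psi_f(re^{2\pi ix})|^2\,dx$ (monotone Parseval for power series, valued in $[0,\infty]$). Thus effectivity is exactly the statement that $f\mapsto\Psi_f$ is isometric from $L^2(\sigma)$ into $H^2$.

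It remains to evaluate this limiting $H^2$ norm through boundary values, and this is where the dichotomy emerges. Let $w=d\sigma_{ac}/dx$. Computing $\mathrm{Re}\,F$ against the Poisson kernel gives $\mathrm{Re}\,F(z)=\tfrac12\bigl(1+(P\sigma)(z)\bigr)$, so by Fatou's theorem $\mathrm{Re}\,F^*=\tfrac12(1+w)$ at Lebesgue-a.e.\ point while $F$ has radial limit $\infty$ at $\sigma_s$-a.e.\ point; a Poltoratskii-type argument \cite{Pol93} shows $\Psi_f^*=f$ holds $\sigma_s$-a.e. Splitting the circle accordingly, the singular part contributes exactly $\int|f|^2\,d\sigma_s$, and on $\{w>0\}$ one must compare $\int|\Psi_f^*|^2\,dx$ with $\int|f|^2 w\,dx$. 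The threshold is visible already in an explicit example: for $\sigma=(1+2\,\mathrm{Re}(ae^{2\pi ix}))\,dx$ with $0<|a|<\tfrac12$ and $f=e_1$, one finds $F(z)=1+az$, $\Psi_f(z)=z+\bar a/(1+az)$, and $\sum_n|\langle f,g_n\rangle|^2=1+\frac{|a|^2(2|a|^2-1)}{1-|a|^2}<1=\|f\|^2$, so a nonconstant absolutely continuous density already destroys the isometry. Conversely, when $\sigma$ is singular the kernel identity $\int_0^1\frac{\Psi_f(z)-f(x)}{1-ze^{-2\pi ix}}\,d\sigma(x)=0$ together with $\mathrm{Re}\,F^*=\tfrac12$ a.e.\ forces $\|\Psi_f\|_{H^2}^2=\|f\|_{L^2(\sigma)}^2$, and the Lebesgue case is immediate since then $F\equiv1$ and $\{e_n\}$ is already orthonormal. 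Because $\sigma$ is a probability measure, a constant density $w\equiv1$ leaves no singular mass, so the surviving cases are precisely $\sigma\perp dx$ and $\sigma=dx$.

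The main obstacle is this last boundary-value step. Everything through the reformulation as an $H^2$-isometry is formal and algebraic, but extracting the sharp ``singular or Lebesgue'' conclusion requires controlling $\int_{\{w>0\}}|\Psi_f^*|^2\,dx$ against $\int|f|^2 w\,dx$ uniformly in $f$, i.e.\ proving that the normalized Cauchy transform restricted to the absolutely continuous part of $\sigma$ is isometric only in the degenerate cases $w\equiv0$ and $w\equiv1$. I expect this to be the hardest part, since it genuinely uses the fine boundary behaviour of Poisson integrals and of the conjugate function rather than the purely formal machinery of $\eqref{gs}$ and $\eqref{xnsum}$; the rest of the argument, in contrast, follows the plan above essentially mechanically. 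For the application in Theorem \ref{mainthm} only the singular half of this equivalence is needed, and that half is exactly the self-contained computation just sketched via the identity $\Psi_f F=\Phi_f$.
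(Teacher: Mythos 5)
First, a point of comparison that matters here: the paper does not prove this statement at all. It is quoted as a known theorem of Kwapie\'{n} and Mycielski \cite{KwMy01} and used as a black box in the proof of Theorem \ref{mainthm}, so your proposal can only be judged on its own merits. Your reduction is sound and in fact mirrors machinery the paper develops elsewhere (Proposition \ref{P:mainprop}): passing to the model $\{e_n\}\subset L^2(\sigma)$, the convolution identity $\Phi_f=F\Psi_f$, and the identification of effectivity with the statement that $f\mapsto\Psi_f=\Phi_f/F$ is isometric into $H^2$ are all correct, modulo one slip: linear density of $\{\varphi_n\}$ only reduces the problem to effectivity of $\{e_n\}$ in $\overline{\mathrm{span}}\{e_n\}$, which equals $L^2(\sigma)$ for singular $\sigma$ (F.~and~M.~Riesz, as in Lemma \ref{spanlem}) but not, for instance, when $\sigma$ is Lebesgue measure. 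Your example with $F(z)=1+az$ is computed correctly and does exhibit failure of the isometry for that family.

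The genuine gap lies in both halves of the dichotomy, and most seriously in the half you declare to be ``exactly the self-contained computation just sketched.'' The identity $\Psi_fF=\Phi_f$ is a tautology valid for \emph{every} probability measure $\sigma$ --- your own example satisfies it and is not isometric --- so the entire content of the implication (singular $\Rightarrow$ effective) must come from how $\mathrm{Re}\,F^*=\tfrac12$ Lebesgue-a.e.\ is exploited, and no mechanism is offered; ``forces'' is not an argument. This implication is precisely Clark's theorem \cite{Clark72} that the normalized Cauchy transform is an isometry of $L^2(\sigma)$ into $H^2$ (onto the model space $\mathcal{H}(b)$ with $b=(F-1)/F$ inner), and its known proofs require real operator-theoretic or boundary-value work; the Poltoratski\u{i} fact you invoke, $\Psi_f^*=f$ at $\sigma_s$-a.e.\ point, is itself a theorem at least as deep as the one being proved, so leaning on it is circular in spirit given that the paper presents Theorem \ref{mainthm} as recovering Poltoratski\u{i}'s results. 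Moreover, the assertion that ``the singular part contributes exactly $\int|f|^2\,d\sigma_s$'' to $\lVert\Psi_f\rVert^2_{H^2}$ is wrong as stated: the $H^2$ norm integrates boundary values against Lebesgue measure, and a carrier of $\sigma_s$ is Lebesgue-null, so it contributes nothing directly to $\int_0^1|\Psi_f^*|^2\,dx$; the influence of $\sigma_s$ is smeared over sets of positive Lebesgue measure, which is exactly why this step is hard. In the converse direction, one explicit one-parameter family $d\sigma=(1+2\,\mathrm{Re}(ae^{2\pi ix}))\,dx$ does not prove that effectivity fails for every $\sigma$ that is neither Lebesgue nor singular: general absolutely continuous densities and, especially, mixed measures with nonzero absolutely continuous and singular parts are untouched, as you yourself concede. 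What you have, then, is a correct reformulation plus one data point, not a proof of either implication.
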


We are now ready to prove Theorem \ref{mainthm}.

\begin{proof}[Proof of Theorem \ref{mainthm}]
By Lemma \ref{spanlem}, the sequence $\{e_n\}_{n=0}^{\infty}$ is linearly dense in $L^2(\mu)$. It consists of unit vectors, because $\mu$ is a probability measure. We see that for all $k,l,m\in\mathbb{N}_0$,
\[ \langle e_{k+m},e_{l+m}\rangle = \int_{[0,1)}e^{2\pi i(k-l)x}\,d\mu(x) =\langle e_k,e_l\rangle. \]
Thus, $\{e_n\}_{n=0}^{\infty}$ is stationary in $L^2(\mu)$, and moreover, $\mu$ is its spectral measure. It then follows from the theorem of Kwapie\'{n} and Mycielski that $\{e_n\}_{n=0}^{\infty}$ is effective in $L^2(\mu)$.

Since $\{e_n\}_{n=0}^{\infty}$ is effective, given any $f\in L^2(\mu)$, we have that the Kaczmarz algorithm sequence defined recursively by
\begin{align*}f_0&=\langle f,e_0\rangle e_0\\
f_{n}&=f_{n-1}+\langle f-f_{n-1},e_n\rangle e_n\end{align*}
satisfies
\begin{equation*}\lim_{n\rightarrow\infty}\lVert f-f_n\rVert=0.\end{equation*}
We recall that
\begin{equation*}f_n=\sum_{i=0}^{n}\langle f,g_i\rangle e_i,\end{equation*}
where the sequence $\{g_n\}_{n=0}^{\infty}$ is the sequence associated to the sequence $\{e_n\}_{n=0}^{\infty}$ by $\eqref{gs}$.
Hence,
\begin{equation*}f=\sum_{i=0}^{\infty}\langle f,g_i\rangle e_i.\end{equation*}
Setting $c_n=\langle f,g_n\rangle=\int_{0}^{1}f(x)\overline{g_n(x)}\,d\mu(x)$ yields
\begin{equation}\label{maineq}f(x)=\sum_{n=0}^{\infty}c_ne^{2\pi inx},\end{equation}
where the convergence is in norm. Furthermore, since $\{e_n\}_{n=0}^{\infty}$ is effective, by $\eqref{gnframe}$ $\{g_n\}_{n=0}^{\infty}$ is a Parseval frame. Thus,
$$\sum_{n=0}^{\infty}\lvert c_n\rvert^2=\sum_{n=0}^{\infty}\lvert\langle f,g_n\rangle\rvert^2=\lVert f\rVert^2.$$
 This completes the proof.
\end{proof}

Since the ternary Cantor measure $\mu_3$ is a singular probability measure, Theorem $\ref{mainthm}$ demonstrates that any $f\in L^2(\mu_3)$ possesses a Fourier series of the form prescribed by the theorem. This comes despite the fact that $\mu_3$ does not possess an orthogonal basis of exponentials. It is still unknown whether $L^2(\mu_3)$ even possesses an exponential frame. 

The sequence $\{e_n\}_{n=0}^{\infty}$ of exponentials is effective in $L^2(\mu)$ for all singular Borel probability measures $\mu$, but it is Bessel in none of them. Indeed, if it were Bessel, $\mu$ would be absolutely continuous rather than singular by Theorem 3.10 of \cite{DHW14}.  Therefore, it is not possible for $\{e_{n}\}_{n=0}^{\infty}$ to be a frame in $L^2(\mu)$.  However, by a remark in \cite{LiOg01}, since $\{ e_{n} \}_{n=0}^{\infty}$ is pseudo-dual to the (in this case Parseval) frame $\{g_{n}\}_{n=0}^{\infty}$, the upper frame bound for $\{g_n\}_{n=0}^{\infty}$ implies a lower frame bound for $\{e_n\}_{n=0}^{\infty}$.

Moreover, some of the examples in \cite{Lai12} of measures that do not possess an exponential frame are singular, and hence if we normalize them to be probability measures, Theorem \ref{mainthm} applies.

We shall give a somewhat more explicit formula for the coefficients $c_n$. We will require a lemma to do this, but first we discuss some notation:

\begin{remarkn*}Recall that a composition of a positive integer $n$ is an ordered arrangement of positive integers that sum to $n$. Whereas for a partition the order in which the terms appear does not matter, two sequences having the same terms but in a different order constitute different compositions. We will think of compositions of $n$ as tuples of positive integers whose entries sum to $n$. The set of compositions of $n$ will be denoted $P_n$. In other words,
$$P_n:=\left\{(p_1,p_2,\ldots,p_k)\mid k\in\mathbb{N},(p_1,p_2,\ldots,p_k)\in\mathbb{N}^k, p_1+p_2+\cdots+p_k=n\right\}.$$
Thus, we have $P_1=\{(1)\}$, $P_2=\{(2),(1,1)\}$, $P_3=\{(3),(1,2),(2,1),(1,1,1)\}$, etc. The length of a tuple $p\in P_n$ will be denoted $l(p)$, i.e. $p=(p_1,p_2,\ldots,p_{l(p)})\in\mathbb{N}^{l(p)}$.

\end{remarkn*}

\begin{lemma}\label{gnformula} Let $\mu$ be a Borel probability measure on $[0,1)$ with Fourier-Stieltjes transform $\widehat{\mu}$. Define $\alpha_0=1$, and for $n\geq1$, let
\begin{equation*}\alpha_n=\sum_{p\in P_n}{(-1)}^{l(p)}\prod_{j=1}^{l(p)}\widehat{\mu}(p_j).\end{equation*}
Let $\{g_n\}_{n=0}^{\infty}$ be as defined in $\eqref{gs}$. Then for all $n\in\mathbb{N}_0$,
$$g_n=\sum_{j=0}^{n}\overline{\alpha_{n-j}}e_{j}.$$
\end{lemma}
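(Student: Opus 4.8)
The plan is to prove the formula by induction on $n$, using the defining recursion \eqref{gs} for $g_n$ together with a recursion satisfied by the coefficients $\alpha_n$. First I would record the Gram data of the exponentials. Since $\mu$ is a positive measure on $[0,1)$, for $n\geq i$ we have
$$\langle e_n,e_i\rangle=\int_{0}^{1}e^{2\pi i(n-i)x}\,d\mu(x)=\overline{\widehat{\mu}(n-i)},$$
using $\widehat{\mu}(-k)=\overline{\widehat{\mu}(k)}$. Thus \eqref{gs} reads $g_n=e_n-\sum_{i=0}^{n-1}\overline{\widehat{\mu}(n-i)}\,g_i$, and it is this conjugated form I will feed the induction.

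Next I would establish the key recursion for the coefficients: for every $n\geq 1$,
$$\alpha_n=-\sum_{r=1}^{n}\widehat{\mu}(r)\,\alpha_{n-r}.$$
To prove this I would split the sum defining $\alpha_n$ according to the first entry of each composition. Adopting the convention that $P_0$ consists of the empty composition (so that $\alpha_0=1$ is the empty product with $l=0$), every $p\in P_n$ with first part $p_1=r$ is obtained uniquely by prepending $r$ to a composition $q\in P_{n-r}$; this raises the length $l(p)$ by one and multiplies $\prod_j\widehat{\mu}(p_j)$ by $\widehat{\mu}(r)$. Summing over $r\in\{1,\dots,n\}$ and factoring out $-\widehat{\mu}(r)$ yields the stated recursion. (Equivalently, $\sum_n\alpha_n z^n$ is the reciprocal of the power series $\sum_k\widehat{\mu}(k)z^k$, but only the recursion is needed below.)

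Finally I would run the induction. The base case $n=0$ is immediate since $g_0=e_0=\overline{\alpha_0}\,e_0$. Assuming the formula for all indices below $n$, I substitute the inductive hypothesis into the recursion for $g_n$:
$$g_n=e_n-\sum_{i=0}^{n-1}\overline{\widehat{\mu}(n-i)}\sum_{j=0}^{i}\overline{\alpha_{i-j}}\,e_j.$$
Interchanging the order of summation to collect the coefficient of each $e_j$, the coefficient of $e_n$ is $1=\overline{\alpha_0}$, while for $0\leq j<n$, after the substitution $s=i-j$, the coefficient of $e_j$ is
$$-\sum_{i=j}^{n-1}\overline{\widehat{\mu}(n-i)\,\alpha_{i-j}}=-\,\overline{\sum_{s=0}^{n-j-1}\widehat{\mu}\bigl((n-j)-s\bigr)\,\alpha_{s}}.$$
By the recursion for $\alpha$ with $N=n-j$, the inner sum $\sum_{s=0}^{N-1}\widehat{\mu}(N-s)\,\alpha_s$ equals $-\alpha_{N}$, so the coefficient of $e_j$ is $-\overline{(-\alpha_{n-j})}=\overline{\alpha_{n-j}}$, completing the induction.

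I expect the main obstacle to be purely bookkeeping: keeping the complex conjugates consistent (the $\alpha_n$ are defined without conjugation, whereas both the Gram entries $\langle e_n,e_i\rangle$ and the target coefficients carry a conjugate), and arranging the reindexing $s=i-j$, $r=N-s$ in the double sum so that the $\alpha$-recursion applies verbatim. The one genuinely structural step is the first-part decomposition of compositions, which is exactly what makes the recursion for $\alpha_n$ fall out cleanly.
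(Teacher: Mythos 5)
Your proof is correct and takes essentially the same route as the paper's: induction on the recursion \eqref{gs}, with the Gram entries $\langle e_n,e_i\rangle=\overline{\widehat{\mu}(n-i)}$, reduced to a coefficient identity that is proved by splitting each composition according to its first part. The only organizational difference is that you extract the recursion $\alpha_n=-\sum_{r=1}^{n}\widehat{\mu}(r)\alpha_{n-r}$ as a standalone step (with the cleaner convention that $P_0$ contains the empty composition, so $\alpha_0$ fits the general formula), whereas the paper proves the equivalent identity $\alpha_{n+1-k}=-\sum_{j=k}^{n}\widehat{\mu}(n+1-j)\alpha_{j-k}$ inline within the induction.
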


\begin{proof}
Clearly, $g_0=e_0$ and $g_1=e_1-\langle e_1,e_0\rangle e_0=e_1-\overline{\widehat{\mu}(1)}e_0$. We have that $P_1=\{(1)\}$, so
$$\alpha_1=(-1)^{1}\widehat{\mu}(1)=-\widehat{\mu}(1).$$
So, the conclusion holds for $n=0,1$. Suppose that the conclusion holds up to some $n\in\mathbb{N}$. We have that
\begin{align*}g_{n+1}&=e_{n+1}-\sum_{j=0}^{n}\langle e_{n+1},e_j\rangle g_j\\
&=e_{n+1}-\sum_{j=0}^{n}\overline{\widehat{\mu}(n+1-j)}g_j\\
&=e_{n+1}-\sum_{j=0}^{n}\overline{\widehat{\mu}(n+1-j)}\left(\sum_{k=0}^{j}\overline{\alpha_{j-k}}e_{k}\right)\\
&=e_{n+1}-\sum_{j=0}^{n}\sum_{k=0}^{j}\overline{\widehat{\mu}(n+1-j)}\overline{\alpha_{j-k}}e_{k}\\
&=e_{n+1}-\sum_{k=0}^{n}\sum_{j=k}^{n}\overline{\widehat{\mu}(n+1-j)}\overline{\alpha_{j-k}}e_k.
\end{align*}
Thus, it remains only to show that
$$\alpha_{n+1-k}=-\sum_{j=k}^{n}\widehat{\mu}(n+1-j)\alpha_{j-k}.$$
We have:
\begin{align*}-\sum_{j=k}^{n}\widehat{\mu}(n+1-j)\alpha_{j-k}&=-\sum_{j=k}^{n}\widehat{\mu}(n+1-j)\sum_{p\in P_{j-k}}{(-1)}^{l(p)}\prod_{w=1}^{l(p)}\widehat{\mu}(p_w)\\
&=\sum_{j=k}^{n}\sum_{p\in P_{j-k}}{(-1)}^{l(p)+1}\widehat{\mu}(n+1-j)\prod_{w=1}^{l(p)}\widehat{\mu}(p_w)\\
&=\sum_{j=1}^{n+1-k}\sum_{p\in P_{n-k+1-j}}{(-1)}^{l(p)+1}\widehat{\mu}(j)\prod_{w=1}^{l(p)}\widehat{\mu}(p_w)
\end{align*}
The last equality is obtained by reindexing the sum $j \mapsto n+1-j$.  Now, if $p=(p_1,\ldots,p_{l(p)})\in P_{n}$, then it is obvious that $p_1\in\{1,2,\ldots,n\}$ and $(p_2,p_3,\ldots,p_{l(p)})\in P_{n-p_1}$ (where we define $P_0=\varnothing$). Conversely, if $p_1\in\{1,2,\ldots,n\}$ and $(p_2,p_3,\ldots,p_{l(p)})\in P_{n-p_1}$, then clearly $(p_1,p_2,\ldots,p_{l(p)})\in P_{n}$. Thus, it follows that
$$-\sum_{j=k}^{n}\widehat{\mu}(n+1-j)\alpha_{j-k}=\sum_{p\in P_{n+1-k}}{(-1)}^{l(p)}\prod_{w=1}^{l(p)}\widehat{\mu}(p_w)=\alpha_{n+1-k}.$$
This completes the proof.
\end{proof}

\begin{remark*}Lemma \ref{gnformula} can easily be generalized to any Hilbert space setting in which the $\{g_n\}_{n=0}^{\infty}$ are induced by a stationary sequence $\{\varphi_n\}_{n=0}^{\infty}$ simply by replacing $\widehat{\mu}(m)$ with $a_m$ in all instances, where the $a_m$ are as defined after Definition \ref{stationdef}.\end{remark*}

It should be pointed out that sequence of scalars $\{\alpha_n\}_{n=0}^{\infty}$ depends only on the measure $\mu$. In a general Hilbert space setting where we may not have stationarity, an expansion of the $\{g_n\}$ in terms of the sequence $\{\varphi_n\}$ to which they are associated by $\eqref{gnformula}$ can be described by using inversion of an infinite lower-triangular Gram matrix. For a treatment, see \cite{HalSzw05}.

\begin{definition}Define a Fourier transform of $f$ by

\begin{equation}\mathcal{F}f(y)=\widehat{f}(y):=\int_{0}^{1}f(x)e^{-2\pi iyx}\,d\mu(x).\end{equation}
Observe that
\begin{equation*}\left\lvert\mathcal{F}f(y)\right\rvert=\left\lvert\langle f,e_y\rangle\right\rvert\leq\lVert f\rVert_{L^2(\mu)}\cdot\lVert e_y\rVert_{L^2(\mu)}=\lVert f\rVert_{L^2(\mu)}.\end{equation*}
Thus $\mathcal{F}$ is a linear operator from $L^2(\mu)$ to $L^\infty(\mathbb{R})$ with operator norm $\lVert\mathcal{F}\rVert=1$.\end{definition}

\begin{corollary}\label{maincor} Assume the conditions and definitions of Theorem \ref{mainthm}. Then the coefficients $c_n$ may be expressed
$$c_n=\sum_{j=0}^{n}\alpha_{n-j}\widehat{f}(j),$$
and as a result
$$f(x)=\sum_{n=0}^{\infty}\left(\sum_{j=0}^{n}\alpha_{n-j}\widehat{f}(j)\right)e^{2\pi inx},$$
where the convergence is in norm.
\end{corollary}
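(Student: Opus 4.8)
The plan is to substitute the closed form for $g_n$ obtained in Lemma \ref{gnformula} directly into the expression $c_n=\langle f,g_n\rangle$ supplied by Theorem \ref{mainthm}, and then to recognize the resulting integrals as values of the Fourier transform $\widehat{f}$. There is no genuine obstacle here: the result is a short computation, and the only point requiring any care is the bookkeeping of complex conjugates.

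First I would recall from Theorem \ref{mainthm} that $c_n=\langle f,g_n\rangle=\int_0^1 f(x)\overline{g_n(x)}\,d\mu(x)$, and from Lemma \ref{gnformula} that $g_n=\sum_{j=0}^{n}\overline{\alpha_{n-j}}\,e_j$. Conjugating the latter gives $\overline{g_n(x)}=\sum_{j=0}^{n}\alpha_{n-j}\,\overline{e_j(x)}=\sum_{j=0}^{n}\alpha_{n-j}\,e^{-2\pi ijx}$, where the bar on $\alpha_{n-j}$ disappears precisely because it already appeared conjugated in the statement of the lemma, i.e. $\overline{\overline{\alpha_{n-j}}}=\alpha_{n-j}$.

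Next I would insert this finite sum into the integral and exchange summation and integration, which is legitimate since the sum has only $n+1$ terms:
\begin{equation*}
c_n=\int_0^1 f(x)\sum_{j=0}^{n}\alpha_{n-j}\,e^{-2\pi ijx}\,d\mu(x)=\sum_{j=0}^{n}\alpha_{n-j}\int_0^1 f(x)\,e^{-2\pi ijx}\,d\mu(x).
\end{equation*}
By the definition of the Fourier transform $\mathcal{F}$, the remaining integral is exactly $\widehat{f}(j)$, so that $c_n=\sum_{j=0}^{n}\alpha_{n-j}\widehat{f}(j)$, which is the first claimed identity.

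Finally, substituting this expression for $c_n$ into the norm-convergent series $f(x)=\sum_{n=0}^{\infty}c_n e^{2\pi inx}$ of Theorem \ref{mainthm} yields the displayed double-sum representation, with convergence in norm inherited directly from that theorem. The only subtlety, as noted, is the conjugation; and since the scalars $\alpha_n$ depend only on $\mu$ and not on $f$, the formula exhibits the coefficients $c_n$ as a fixed lower-triangular linear transform of the data $\{\widehat{f}(j)\}_{j=0}^{n}$.
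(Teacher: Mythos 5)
Your proposal is correct and follows essentially the same route as the paper: expand $c_n=\langle f,g_n\rangle$ using the formula $g_n=\sum_{j=0}^{n}\overline{\alpha_{n-j}}e_j$ from Lemma \ref{gnformula}, identify $\langle f,e_j\rangle=\widehat{f}(j)$, and substitute into the series of Theorem \ref{mainthm}. The paper does this in one line via conjugate-linearity of the inner product, whereas you write out the integral explicitly, but the argument is identical in substance.
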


\begin{proof}We compute:
\[ c_n = \langle f,g_n\rangle = \left\langle f,\sum_{j=0}^{n}\overline{\alpha_{n-j}}e_j\right\rangle = \sum_{j=0}^{n}\alpha_{n-j}\widehat{f}(j). \]
The second formula then follows by substitution into $\eqref{maineq}$.
\end{proof}

While we have Parseval's identity $\| f \|^2 = \sum_{n=0}^{\infty} | c_{n} |^2 $ as demonstrated by Theorem \ref{mainthm}, in general the lack of the Bessel condition means that $\| f \|^2 \lesssim \sum_{n=0}^{\infty} |\widehat{f}(n)|^2$ does not hold.  In fact, Proposition 3.10 in \cite{DHSW11} demonstrates an example of a function where $\sum_{n=0}^{\infty}|\widehat{f}(n)|^2=+\infty$.

\subsection{Non-Uniqueness of Fourier Coefficients}
We begin with an example. In \cite{JP98}, it was shown that the quaternary Cantor measure $\mu_4$ possesses an orthonormal basis of exponentials. This basis is $\{e_\lambda\}_{\lambda\in\Lambda}$, where the spectrum $\Lambda$ is given by
$$\Lambda=\left\{\sum_{n=0}^{k}\alpha_n4^n:\alpha_n\in\{0,1\},k\in\mathbb{N}_0\right\}=\{0,1,4,5,16,17,20,21,\ldots\}.$$
As a result, any vector $f\in L^2(\mu_4)$ may be written as
$$f=\sum_{\lambda\in\Lambda}\langle f,e_\lambda\rangle e_\lambda,$$
 where the convergence is in the $L^2(\mu_4)$ norm. Notice that if we define a sequence of vectors $\{h_n\}_{n=0}^{\infty}$ by
$$h_n=\begin{cases}e_n&\text{if }n\in\Lambda\\0&\text{otherwise,}\end{cases}$$
we have that
$$\sum_{n=0}^{\infty}\langle f,h_n\rangle e_n=\sum_{\lambda\in\Lambda}\langle f,e_\lambda\rangle e_\lambda=f.$$
On the other hand, since $\mu_4$ is a singular probability measure, by Theorem $\ref{mainthm}$  we also have
$$f=\sum_{n=0}^{\infty}c_ne_n=\sum_{n=0}^{\infty}\langle f,g_n\rangle e_n.$$
It can easily be checked that $h_0=g_0=e_0$ and $h_1=g_1=e_1$, but that $g_{2} \neq h_2=0$.  Thus, the sequences $\{g_{n}\}$ and $\{h_{n} \}$ yield different expansions for general $f \in L^2(\mu_{4})$.

We can again use the Kaczmarz algorithm to generate a large class of sequences $\{h_n\}$ such that $\sum\langle f,h_n\rangle e_n=f$ in the $L^2(\mu)$ norm as follows.  We use $\langle \cdot , \cdot \rangle_{\mu}$ to denote the scalar product in $L^2(\mu)$.

\begin{theorem} \label{T:reproduce}
Let $\mu$ be a singular Borel probability measure on $[0,1)$. Let $\nu$ be another singular Borel probability measure on $[0,1)$ such that $\nu\perp\mu$. Let $0<\eta\leq1$, and define $\lambda:=\eta\mu+(1-\eta)\nu$. Let $\{h_n\}$ be the sequence associated to $\{e_n\}$ in $L^2(\lambda)$ via the Kaczmarz algorithm in Equation \eqref{gs}. Then for all $f\in L^2(\mu)$,
\begin{equation} \label{Eq:reproduce}
f=\sum_{n=0}^{\infty}{\langle f,\eta h_n\rangle}_{\mu} e_n
\end{equation}
in the $L^2(\mu)$ norm. Moreover, if $\lambda^\prime=\eta^\prime\mu+(1-\eta^\prime)\nu^\prime$ also satisfies the hypotheses, then $\lambda^\prime\neq\lambda$ implies $\{\eta^\prime h_n^\prime\}\neq\{\eta h_n\}$ in $L^2(\mu)$. \end{theorem}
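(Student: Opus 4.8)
The plan is to transfer the reconstruction formula of Theorem \ref{mainthm}, which holds in $L^2(\lambda)$, down to $L^2(\mu)$. First I observe that $\lambda=\eta\mu+(1-\eta)\nu$ is again a singular Borel probability measure: its total mass is $\eta+(1-\eta)=1$, and it is singular with respect to Lebesgue measure because both $\mu$ and $\nu$ are. Hence Theorem \ref{mainthm} applies to $\lambda$, so that $\{e_n\}$ is effective in $L^2(\lambda)$ with associated sequence $\{h_n\}$ and every $F\in L^2(\lambda)$ satisfies $F=\sum_{n}\langle F,h_n\rangle_\lambda e_n$ in $L^2(\lambda)$. Since $\eta>0$ we have $\eta\mu\leq\lambda$, hence $\mu\ll\lambda$; in particular each $h_n$, being $\lambda$-square-integrable, also lies in $L^2(\mu)$, so the coefficients $\langle f,\eta h_n\rangle_\mu$ are well defined.

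The bridge between the two spaces uses $\mu\perp\nu$. Fix a Borel set $E$ with $\mu(E^c)=0$ and $\nu(E)=0$, and for $f\in L^2(\mu)$ define the extension $\tilde f:=f\,\mathbf{1}_E$. A direct computation gives $\lVert\tilde f\rVert_\lambda^2=\eta\lVert f\rVert_\mu^2$, so $\tilde f\in L^2(\lambda)$, and more importantly
\[
\langle\tilde f,h_n\rangle_\lambda=\eta\int_E f\,\overline{h_n}\,d\mu=\langle f,\eta h_n\rangle_\mu,
\]
because $\tilde f$ vanishes on the set carrying $\nu$ while $\mu$ is carried by $E$. Applying the $L^2(\lambda)$ reconstruction to $\tilde f$ therefore yields $\tilde f=\sum_n\langle f,\eta h_n\rangle_\mu e_n$ in $L^2(\lambda)$. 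Writing $S_N$ for the $N$-th partial sum, the same splitting of $\lambda$ gives $\eta\lVert f-S_N\rVert_\mu^2\leq\lVert\tilde f-S_N\rVert_\lambda^2\to 0$; since $\eta>0$ this forces $S_N\to f$ in $L^2(\mu)$, which is exactly \eqref{Eq:reproduce}.

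For the distinctness statement I argue by contraposition: assuming $\eta h_n=\eta' h_n'$ in $L^2(\mu)$ for every $n$, I will show $\lambda=\lambda'$. Here I invoke Lemma \ref{gnformula} applied to $\lambda$ and to $\lambda'$, which expresses $h_n=\sum_{j=0}^n\overline{\alpha_{n-j}^{(\lambda)}}e_j$ (and similarly $h_n'$ with $\alpha_k^{(\lambda')}$) as explicit trigonometric polynomials, so their images in $L^2(\mu)$ are unambiguous. The $n=0$ comparison reads $\eta=\eta'$ as constant functions, which gives $\eta=\eta'$ since $\mu$ is a nonzero measure. Proceeding by induction on $n\geq 1$, suppose $\widehat\lambda(k)=\widehat{\lambda'}(k)$ for $1\leq k\leq n-1$; then $\alpha_k^{(\lambda)}=\alpha_k^{(\lambda')}$ for $k\leq n-1$, and in the difference $\eta h_n-\eta' h_n'=\eta(h_n-h_n')$ every coefficient of $e_1,\dots,e_n$ cancels, leaving only the constant term $\eta(\overline{\alpha_n^{(\lambda)}}-\overline{\alpha_n^{(\lambda')}})e_0$. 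This constant is zero in $L^2(\mu)$ only if it is genuinely zero, whence $\alpha_n^{(\lambda)}=\alpha_n^{(\lambda')}$; combined with the recursion $\alpha_n=-\widehat\lambda(n)-\sum_{m=1}^{n-1}\widehat\lambda(m)\alpha_{n-m}$ and the agreement of lower-order data, this forces $\widehat\lambda(n)=\widehat{\lambda'}(n)$. Since positive measures satisfy $\widehat\lambda(-n)=\overline{\widehat\lambda(n)}$ and $\widehat\lambda(0)=1$, agreement on all nonnegative integers gives agreement of the full Fourier--Stieltjes transforms, and by uniqueness $\lambda=\lambda'$.

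The only genuine subtlety is the passage from $L^2(\lambda)$-convergence to $L^2(\mu)$-convergence, which is precisely where $\eta>0$ (equivalently $\mu\ll\lambda$) is used; without it the extension $\tilde f$ would retain no information about $\mu$. The distinctness argument, by contrast, is essentially formal once Lemma \ref{gnformula} is in hand: the key observation that makes it clean is that the scaled sequences $\{\eta h_n\}$ and $\{\eta' h_n'\}$ can differ only in their constant ($e_0$) coefficient at each stage of the induction, so that no linear-independence property of the exponentials in $L^2(\mu)$ is needed.
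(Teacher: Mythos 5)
Your proof is correct, and its first half is essentially the paper's own argument: the same extension $\tilde f = f\chi_E$ (the paper's $f\cdot\chi_A$), the same coefficient identity $\langle \tilde f, h_n\rangle_\lambda = \langle f,\eta h_n\rangle_\mu$, and the same comparison $\eta\lVert f - S_N\rVert^2_{\mu} \leq \lVert \tilde f - S_N\rVert^2_{\lambda}$ to transfer norm convergence from $L^2(\lambda)$ to $L^2(\mu)$. In the distinctness half you genuinely deviate. The paper argues in the forward direction: it invokes the F.\ and M.\ Riesz theorem to pass from $\lambda\neq\lambda'$ to the existence of a nonnegative integer $n$ with $\widehat{\lambda}(n)\neq\widehat{\lambda'}(n)$, hence to differing sequences $\{\alpha_n\}$, and then evaluates the difference of the $h$'s at the smallest differing index. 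You instead argue contrapositively, by induction: equality $\eta h_n = \eta' h_n'$ in $L^2(\mu)$ for all $n$ forces $\widehat{\lambda}(n)=\widehat{\lambda'}(n)$ for all $n\geq 0$, and then the Hermitian symmetry $\widehat{\lambda}(-n)=\overline{\widehat{\lambda}(n)}$ of positive measures together with uniqueness of Fourier--Stieltjes coefficients yields $\lambda=\lambda'$. This bypasses F.\ and M.\ Riesz entirely, using only positivity (not singularity) of $\lambda-\lambda'$ at this step, which is a modest but real simplification; the paper's route, conversely, gets the differing coefficient in one stroke rather than through an induction. Both arguments hinge on the identical crucial observation, which you correctly isolate: at the relevant index the difference $\eta h_n - \eta' h_n'$ collapses to a constant multiple of $e_0$, whose vanishing in $L^2(\mu)$ is equivalent to its vanishing as a scalar, so no linear-independence property of the exponentials in $L^2(\mu)$ is needed (and indeed such independence can fail, e.g.\ when $\mu$ has atoms).
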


\begin{proof}Because $\nu\perp\mu$, there exist disjoint Borel sets $A$ and $B$ such that $A\cup B=[0,1)$, $\mu(B)=0$, and $\nu(A)=0$. Since $\lambda$ is a singular Borel probability measure, the exponentials $\{e_n\}_{n=0}^{\infty}$ are effective in $L^2(\lambda)$. Let $\{h_n\}$ denote the sequence associated to $\{e_n\}$ in $L^2(\lambda)$ via Equation $\eqref{gs}$. Let $f\in L^2(\mu)$, and define $\tilde{f}=f\cdot\chi_{A}$. Clearly, $\tilde{f}\in L^2(\lambda)$.\\

We have that
$$\tilde{f}=\sum_{n=0}^{\infty}{\left\langle \tilde{f},h_n\right\rangle}_{\lambda} e_n$$
in the $L^2(\lambda)$ norm. Now, note that
\begin{align*}\langle f,\eta h_n\rangle_{\mu} &=\int_{0}^{1}f(x)\overline{\eta h_n(x)}\,d\mu(x)\\
&=\int_{A}f(x)\overline{h_n(x)}\,d\lambda\\
&=\langle\tilde{f},h_n\rangle_{\lambda}.
\end{align*}
Therefore,
\[ \lim_{N \to \infty} \left\lVert \tilde{f} - \sum_{n=0}^{N} \langle f, \eta h_{n} \rangle_{\mu} e_{n} \right\rVert^{2}_{L^2(\lambda)} = 0. \]
Since
\[ \left\lVert f-\sum_{n=0}^{N}\langle f,\eta h_n\rangle_{\mu}e_n\right\rVert^2_{L^2(\mu)} \leq \frac{1}{\eta}\left\lVert\tilde{f}-\sum_{n=0}^{N}\langle f,\eta h_n\rangle_{\mu} e_n\right\rVert^2_{L^2(\lambda)}, \]
Equation (\ref{Eq:reproduce}) follows with convergence in $L^2(\mu)$.

It remains only to show that different measures $\lambda$ generate different sequences $\{\eta h_n\}$. Therefore, suppose $\nu^\prime$ is another singular Borel probability measure on $[0,1)$ such that $\nu^\prime\perp\mu$, and let $0<\eta^\prime\leq1$. Set $\lambda^\prime=\eta^\prime\mu+(1-\eta^\prime)\nu^\prime$, and let $\{h_n^\prime\}$ be the sequence associated to $\{e_n\}$ in $L^2(\lambda^\prime)$ via Equation \eqref{gs}. Suppose that $\lambda\neq\lambda^\prime$. We wish to show that $\{\eta h_n\}\neq\{\eta^\prime h^\prime_n\}$ in $L^2(\mu)$.

If $\eta\neq\eta^\prime$, then $\eta h_0=\eta e_0\neq\eta^\prime e_0=\eta^\prime h_0^\prime$ in $L^2(\mu)$. Therefore, assume that $\eta=\eta^\prime$. By virtue of the F. and M. Riesz Theorem, since $\lambda\neq\lambda^\prime$, there must exist an integer $n$ such that $\widehat{\lambda}(n)\neq\widehat{\lambda^\prime}(n)$.  Following \cite{HalSzw05}, we define a lower-triangular Gram matrix $G$ of the nonnegative integral exponentials by
$$(G)_{ij}=\begin{cases}\langle e_i,e_j\rangle=\widehat{\lambda}(j-i)&\text{if }i\geq j\\0&\text{otherwise}\end{cases},$$
and then the inverse of this matrix determines the sequence $\{h_n\}$ associated to $\{e_n\}$ in $L^2(\lambda)$ via
$h_n=\sum_{i=0}^{n}\overline{\alpha_{n-i}}e_i$ where $\alpha_{n-i}=\overline{(G^{-1})_{ni}}$. See \cite{HalSzw05} for details.
($G$ and $G^{-1}$ are stratified since $\{e_n\}$ is stationary.) Therefore, the sequences of scalars $\{\alpha_n\}_{n=0}^{\infty}$ and $\{\alpha^\prime_n\}_{n=0}^{\infty}$ induced by $\lambda$ and $\lambda^\prime$, respectively, in Lemma \ref{gnformula} differ. Let $n$ be the smallest positive integer such that $\alpha_n\neq\alpha_n^\prime$. Then since $\eta=\eta^\prime$, we have
$$\eta^\prime h_n^\prime-\eta h_n=\eta\sum_{j=0}^{n}\left(\overline{\alpha^\prime_{n-j}}-\overline{\alpha_{n-j}}\right)e_j=\eta(\overline{\alpha_{n}-\alpha^\prime_n})e_0 \neq 0.$$
Thus, $\{\eta h_n\}$ and $\{\eta^\prime h_n^\prime\}$ are distinct sequences in $L^2(\mu)$.\end{proof}

\begin{remark}
We note that any convex combination of sequences $\{h_n\}$ that satisfy Equation (\ref{Eq:reproduce}) will again satisfy that equation.
\end{remark}

In general, for a fixed $f \in L^{2}(\mu)$ the set of coefficient sequences $\{ d_{n}\}$ that satisfy $f = \sum_{n=0}^{\infty} d_{n} e_{n}$ can be parametrized by sequences $\{\gamma_n\}$ of scalars satisfying $\sum_{n=0}^{\infty}\gamma_n e_n=0$ via $d_{n} = \langle f, g_{n} \rangle_{\mu} + \alpha_{n}$.  Clearly, Theorem \ref{T:reproduce} is not a complete description of all Fourier series expansions for $f$.

\subsection{Connection to the Normalized Cauchy Transform}
The series $\sum_{n=0}^{\infty}\langle f,g_n\rangle e_n$ given by Theorem \ref{mainthm} is the boundary function of the analytic function $\sum_{n=0}^{\infty}\langle f,g_n\rangle z^n$ on $\mathbb{D}$. This function is in the classical $H^2$ Hardy space since the coefficients are square summable. An intriguing connection between the Kaczmarz algorithm and de Branges-Rovnyak spaces is given by the observations that follow. 

Given a positive Borel measure $\mu$ on $[0,1)$, define a map $V_\mu$, called the normalized Cauchy transform, from $L^1(\mu)$ to the functions defined on $\mathbb{C}\setminus\mathbb{T}$ by
$$V_\mu f(z):=\frac{\int_{0}^{1}\frac{f(e^{2\pi ix})}{1-ze^{-2\pi ix}}\,d\mu(x)}{\int_{0}^{1}\frac{1}{1-ze^{-2\pi ix}}\,d\mu(x)}.$$
Poltoratski\u{i} proved in \cite{Pol93} that $V_{\mu}$ maps $L^2(\mu)$ to the de Branges-Rovnyak space $\mathcal{H}(b)$, where $b(z)$ is the inner function associated to $\mu$ via the Herglotz representation theorem.  Poltoratski\u{i} also proved that $V_{\mu}$ is the inverse of a unitary operator that is a rank one perturbation of the unilateral shift as given by Clark \cite{Clark72}, and hence $V_{\mu}$ is unitary.

\begin{proposition}  \label{P:mainprop}
Assume the hypotheses of Theorem \ref{mainthm}. Then for $z\in\mathbb{D}$,
$$V_\mu f(z) = \sum_{n=0}^{\infty}\langle f,g_n\rangle z^n.$$
\end{proposition}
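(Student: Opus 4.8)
The plan is to expand both the numerator and denominator of $V_\mu f(z)$ as power series in $z$ and then recognize the resulting quotient as the Cauchy product already computed in Corollary \ref{maincor}. Fix $z \in \mathbb{D}$. Since $|z e^{-2\pi i x}| = |z| < 1$ uniformly in $x$, the geometric series $\frac{1}{1 - z e^{-2\pi i x}} = \sum_{n=0}^{\infty} z^n e^{-2\pi i n x}$ converges uniformly in $x$ and is dominated by $(1-|z|)^{-1}$; because $f \in L^2(\mu) \subset L^1(\mu)$ for the finite measure $\mu$, dominated convergence permits term-by-term integration. This yields the denominator $\int_0^1 (1 - z e^{-2\pi i x})^{-1}\,d\mu(x) = \sum_{n=0}^{\infty} \widehat{\mu}(n) z^n =: M(z)$ and the numerator $\int_0^1 f(x)(1 - z e^{-2\pi i x})^{-1}\,d\mu(x) = \sum_{n=0}^{\infty} \widehat{f}(n) z^n =: F(z)$, so that $V_\mu f(z) = F(z)/M(z)$.

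Next I would rewrite the target series. By Corollary \ref{maincor}, $\langle f, g_n \rangle = \sum_{j=0}^{n} \alpha_{n-j}\widehat{f}(j)$, which is precisely the $n$-th Cauchy coefficient of the product of $A(z) := \sum_{n=0}^{\infty} \alpha_n z^n$ with $F(z)$. Hence, once convergence is secured, $\sum_{n=0}^{\infty} \langle f, g_n \rangle z^n = A(z) F(z)$. Comparing with $V_\mu f(z) = F(z)/M(z)$, the entire proposition reduces to the single scalar identity $A(z) M(z) = 1$. This is the conceptual heart of the argument and the main point to nail down, but it is essentially already available: equating coefficients of $z^n$ for $n \geq 1$ in $A(z)M(z)$ and using $\widehat{\mu}(0) = 1$, the identity $AM = 1$ is equivalent to the recurrence $\alpha_n = -\sum_{j=1}^{n} \widehat{\mu}(j)\,\alpha_{n-j}$, which is exactly the relation on the $\alpha_n$ derived in the proof of Lemma \ref{gnformula} (the case $k=0$).

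Finally I would handle the analytic bookkeeping needed to make these formal manipulations rigorous on all of $\mathbb{D}$. The map $w \mapsto (1-w)^{-1}$ carries the open unit disk into the half-plane $\{\operatorname{Re} > 1/2\}$, so $\operatorname{Re}\,(1 - z e^{-2\pi i x})^{-1} > 1/2$ pointwise and therefore $\operatorname{Re} M(z) > 1/2 > 0$; in particular $M$ is analytic and nonvanishing on $\mathbb{D}$. Consequently $1/M$ is analytic on $\mathbb{D}$, and since its Taylor coefficients obey the same recurrence and initial condition as the $\alpha_n$, they coincide with them; thus $A(z) = \sum_{n=0}^{\infty}\alpha_n z^n$ converges on all of $\mathbb{D}$ and equals $1/M(z)$. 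Because $A$ converges absolutely inside $\mathbb{D}$, as does $F$ (its coefficients satisfy $|\widehat{f}(n)| \leq \|f\|_{L^2(\mu)}$), the Cauchy product $A(z)F(z) = \sum_{n=0}^{\infty}\langle f, g_n\rangle z^n$ is legitimate, and assembling the pieces gives $V_\mu f(z) = F(z)/M(z) = A(z)F(z) = \sum_{n=0}^{\infty}\langle f, g_n\rangle z^n$ for every $z \in \mathbb{D}$.
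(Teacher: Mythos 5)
Your proposal is correct and follows essentially the same route as the paper: expand numerator and denominator of $V_\mu f$ as power series, use $\operatorname{Re}\int_0^1(1-ze^{-2\pi ix})^{-1}\,d\mu(x)>1/2$ to get analyticity of the reciprocal, identify its Taylor coefficients with the $\alpha_n$, and finish with the Cauchy product. The only (harmless) variation is in that identification step: the paper shows by induction via \eqref{gs} that the reciprocal's coefficients reproduce the $g_n$ and then invokes uniqueness, whereas you match the scalar recurrence $\alpha_n=-\sum_{j=1}^n\widehat{\mu}(j)\alpha_{n-j}$ (the $k=0$ case established inside the proof of Lemma \ref{gnformula}) directly against the recurrence satisfied by the Taylor coefficients of $1/M$ — a slightly more direct bookkeeping of the same fact.
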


\begin{proof}Define
\begin{equation}\label{Fdef}F(z):=\int_{0}^{1}\frac{1}{1-ze^{-2\pi ix}}\,d\mu(x).\end{equation}
That is, $F(z)$ is the Cauchy integral of $\mu$, which is analytic on $\mathbb{D}$. It is easily seen that
\begin{equation*}F(z)=\sum_{n=0}^{\infty}\widehat{\mu}(n)z^n.\end{equation*}
By \eqref{Fdef}, $\text{Re}(F(z))>1/2$ for $z\in\mathbb{D}$, and hence, $1/F(z)$ is also analytic on $\mathbb{D}$. Writing $1/F(z)=\sum_{n=0}^{\infty}c_nz^n$, we have $1=\sum_{n=0}^{\infty}\left(\sum_{k=0}^{n}c_k\widehat{\mu}(n-k)\right)z^n$, and so $\sum_{k=0}^{n}c_k\widehat{\mu}(n-k)=0$ for all $n\geq1$. Then using \eqref{gs}, an inductive argument shows that $g_n=\sum_{i=0}^{n}\overline{c_{n-i}}e_i$ for all $n$. The $c_n$ are unique by Gaussian elimination, so in fact $c_n=\alpha_n$ for all $n$, the $\alpha_n$ as in Lemma $\ref{gnformula}$. Hence,
$$\frac{1}{F(z)}=\sum_{n=0}^{\infty}\alpha_nz^n.$$
It is also clear that
$$\int_{0}^{1}\frac{f(e^{2\pi ix})}{1-ze^{-2\pi ix}}\,d\mu(x)=\sum_{n=0}^{\infty}\langle f,e_n\rangle z^n.$$
Therefore, we have
\begin{align*}\frac{\int_{0}^{1}\frac{f(e^{2\pi ix})}{1-ze^{-2\pi ix}}\,d\mu(x)}{\int_{0}^{1}\frac{1}{1-ze^{-2\pi ix}}\,d\mu(x)}&=\left(\sum_{n=0}^{\infty}\langle f,e_n\rangle z^n\right)\left(\sum_{m=0}^{\infty}\alpha_mz^m\right)\\
&=\sum_{n=0}^{\infty}\left(\sum_{i=0}^{n}\langle f,\overline{\alpha_{n-i}}e_i\rangle\right)z^n\\
&=\sum_{n=0}^{\infty}\langle f,g_n\rangle z^n.
\end{align*}
\end{proof}

Two of the main results in \cite{Pol93} are Theorems 2.5 and 2.7, which together show that the Fourier series of $V_{\mu}f(z)$ converges to $f$ in the $L^2(\mu)$ norm provided that $\mu$ is singular.  Combining this together with Proposition \ref{P:mainprop} recovers our Theorem \ref{mainthm}.  Adding Clark's result that implies that $V_{\mu}$ is unitary, and we recover the Plancherel identity.

Poltoratski\u{i}'s results are more general than our Theorem \ref{mainthm} in the following way:  if $\mu$  has an absolutely continuous component and a singular component, then for any $f \in L^2(\mu)$, the Fourier series of $V_{\mu} f$ converges to $f$ in norm with respect to the singular component.  The Fourier series cannot in general converge to $f$ with respect to the absolutely continuous component of $\mu$ since the nonnegative exponentials are incomplete.  It is unclear whether for such a $\mu$ every $f$ can be expressed in terms of a bi-infinite Fourier series.  For singular $\mu$, our Theorem \ref{mainthm} guarantees norm convergence of the Fourier series of $V_{\mu} f$ to $f$ as do Poltoratski\u{i}'s results.  However, Poltoratski\u{i} also comments in \cite{Pol93} that the Fourier series converges pointwise $\mu$-a.e. to $f$.


\section{A Shannon Sampling Formula}

In \cite{Str00}, Strichartz introduces a sampling formula for functions that are bandlimited in a generalized sense.  He considers functions whose spectra are contained in a certain compact set $K$ that is the support of a spectral measure $\mu$.  If $F$ is a strongly $K$-bandlimited function, then he shows that it has an expression
\begin{equation*}F(x)=\sum_{\lambda\in\Lambda}F(\lambda)\widehat{\mu}(x-\lambda),
\end{equation*} where $\Lambda$ is a spectrum for $L^2(\mu)$. 

We will now prove a similar sampling formula for analogously bandlimited functions.  Our formula does not rely on an exponential basis and hence holds even for non-spectral singular measures. (Indeed, it even holds for singular measures devoid of exponential frames.) The price paid for not using an exponential sequence dual to itself is that the samples $F(\lambda)$ are replaced by the less tidy $\sum_{j=0}^{n}\alpha_{n-j} F(j)$.   

\begin{theorem}Let $\mu$ be a singular Borel probability measure on $[0,1)$. Let $\{\alpha_i\}_{i=0}^{\infty}$ be the sequence of scalars induced by $\mu$ by Lemma $\ref{gnformula}$. Suppose $F:\mathbb{R}\rightarrow\mathbb{C}$ is of the form
$$F(y)=\int_{0}^{1}f(x)e^{-2\pi iyx}\,d\mu(x)$$ for some $f\in L^2(\mu)$. Then
\begin{equation*}
F(y)=\sum_{n=0}^{\infty}\left(\sum_{j=0}^{n}\alpha_{n-j}F(j)\right)\widehat{\mu}(y-n),
\end{equation*}
where the series converges uniformly in $y$. 
\end{theorem}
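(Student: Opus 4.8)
The plan is to recognize that $F(y)$ is nothing but $\widehat{f}(y) = \langle f, e_y \rangle_\mu$, and that the inner sum $\sum_{j=0}^{n} \alpha_{n-j} F(j)$ is precisely the Fourier coefficient $c_n$ produced by Corollary \ref{maincor}. Indeed, since $F(j) = \widehat{f}(j)$ by the very definition of $F$, Corollary \ref{maincor} gives $c_n = \sum_{j=0}^{n} \alpha_{n-j}\widehat{f}(j) = \sum_{j=0}^{n} \alpha_{n-j} F(j)$. Thus the claimed identity is equivalent to $F(y) = \sum_{n=0}^{\infty} c_n \,\widehat{\mu}(y-n)$, and the whole theorem reduces to unwinding Theorem \ref{mainthm} against the test vectors $e_y$.

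Next I would expand $F(y)$ using the norm-convergent series $f = \sum_{n} c_n e_n$ furnished by Theorem \ref{mainthm}. Since the functional $\langle \cdot, e_y \rangle_\mu$ is continuous on $L^2(\mu)$, it passes through the convergent sum, giving
\[ F(y) = \langle f, e_y \rangle_\mu = \sum_{n=0}^{\infty} c_n \langle e_n, e_y \rangle_\mu. \]
A direct computation yields $\langle e_n, e_y \rangle_\mu = \int_0^1 e^{-2\pi i(y-n)x}\, d\mu(x) = \widehat{\mu}(y-n)$, which is exactly the stated formula once the $c_n$ are rewritten via the first paragraph.

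The only point requiring the slightest care is the uniform convergence in $y$. Here I would exploit the fact that $\|e_y\|_{L^2(\mu)} = 1$ for every real $y$, since $\mu$ is a probability measure (this is precisely the observation that $\mathcal{F}$ maps into $L^\infty$ with operator norm $1$). Writing the truncation error as an inner product and applying Cauchy--Schwarz,
\[ \left| F(y) - \sum_{n=0}^{N} c_n \widehat{\mu}(y-n) \right| = \left| \left\langle f - \sum_{n=0}^{N} c_n e_n, e_y \right\rangle_\mu \right| \leq \left\| f - \sum_{n=0}^{N} c_n e_n \right\|_{L^2(\mu)}. \]
The right-hand side is independent of $y$ and tends to $0$ by Theorem \ref{mainthm}, so taking the supremum over $y$ delivers uniform convergence.

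I do not anticipate any genuine obstacle: the entire argument is a continuity and Cauchy--Schwarz unwinding of results already established, and the uniform bound hinges solely on the normalization $\|e_y\|_{L^2(\mu)} = 1$, which is uniform in $y$. The one thing to state carefully is that the interchange of summation and inner product is licensed by norm convergence in $L^2(\mu)$ together with continuity of the inner product, rather than by any pointwise estimate on the $\widehat{\mu}(y-n)$.
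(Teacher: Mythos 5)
Your proposal is correct and follows essentially the same route as the paper: expand $F(y)=\langle f,e_y\rangle_\mu$ against the norm-convergent series from Theorem \ref{mainthm}, identify the coefficients via Corollary \ref{maincor}, and get uniformity from the bound $|\langle h,e_y\rangle_\mu|\leq\lVert h\rVert_{L^2(\mu)}$ uniform in $y$. The paper phrases this last step as boundedness of the operator $\mathcal{F}:L^2(\mu)\rightarrow L^\infty(\mathbb{R})$, which is exactly your Cauchy--Schwarz estimate in operator language.
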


\begin{proof}
By Theorem \ref{mainthm}, $f$ may be expressed $f=\sum_{n=0}^{\infty}c_ne_n$, the convergence occurring in the $L^2(\mu)$ norm. We compute:
\begin{align*}F(y)&=\int_{0}^{1}f(x)e^{-2\pi iyx}\,d\mu(x)\\
&=\langle f,e_y\rangle\\
&=\left\langle\sum_{n=0}^{\infty}c_n e_n,e_y\right\rangle\\
&=\sum_{n=0}^{\infty}c_n\langle e_n,e_y\rangle\\
&=\sum_{n=0}^{\infty}c_n\widehat{\mu}(y-n).
\end{align*}

Recall from Corollary \ref{maincor} that
$$c_n=\sum_{j=0}^{n}\alpha_{n-j}\widehat{f}(j)=\sum_{j=0}^{n}\alpha_{n-j}F(j),$$
where the $\alpha_n$ are defined by Lemma \ref{gnformula}. Combining these computations, we obtain that for any $y\in\mathbb{R}$,
\begin{equation}\label{thm2maineq}F(y)=\sum_{n=0}^{\infty}\left(\sum_{j=0}^{n}\alpha_{n-j}F(j)\right)\widehat{\mu}(y-n).\end{equation}
Let $S_k:=\sum_{n=0}^{k}c_n e_n$. Since $S_k\rightarrow f$ in the $L^2(\mu)$ norm and the Fourier transform $\mathcal{F}:L^2(\mu)\rightarrow L^\infty(\mathbb{R})$ is bounded, $\{\mathcal{F}S_k\}\rightarrow\mathcal{F}f$ in $L^\infty(\mathbb{R})$. Then because $\mathcal{F}S_k(y)=\sum_{n=0}^{k}c_n\widehat{\mu}(y-n)$, we have that $\sum_{n=0}^{\infty}c_n\widehat{\mu}(y-n)$ and hence $\eqref{thm2maineq}$ converge uniformly in $y$ to $\mathcal{F}f(y)$.
\end{proof}

It should be noted that, in contradistinction to the sampling formula of Strichartz, the convergence of the series in Equation (\ref{thm2maineq}) does not follow from the Cauchy-Schwarz inequality, because it is possible that $\sum_{n=0}^{\infty} | \widehat{\mu}(y-n) |^2 = + \infty$.





\bibliographystyle{amsalpha}
\bibliography{fssm}

\providecommand{\bysame}{\leavevmode\hbox to3em{\hrulefill}\thinspace}
\providecommand{\MR}{\relax\ifhmode\unskip\space\fi MR }
\providecommand{\MRhref}[2]{%
  \href{http://www.ams.org/mathscinet-getitem?mr=#1}{#2}
}
\providecommand{\href}[2]{#2}
\begin{thebibliography}{DHSW11}

\bibitem[Cla72]{Clark72}
Douglas~N. Clark, \emph{One dimensional perturbations of restricted shifts}, J.
  Analyse Math. \textbf{25} (1972), 169--191. \MR{0301534 (46 \#692)}

\bibitem[DHSW11]{DHSW11}
Dorin~Ervin Dutkay, Deguang Han, Qiyu Sun, and Eric Weber, \emph{On the
  {B}eurling dimension of exponential frames}, Adv. Math. \textbf{226} (2011),
  no.~1, 285--297. \MR{2735759 (2012a:42058)}

\bibitem[DHW14]{DHW14}
Dorin~Ervin Dutkay, Deguang Han, and Eric Weber, \emph{Continuous and discrete
  {F}ourier frames for fractal measures}, Trans. Amer. Math. Soc. \textbf{366}
  (2014), no.~3, 1213--1235. \MR{3145729}

\bibitem[DS52]{DS52}
R.~J. Duffin and A.~C. Schaeffer, \emph{A class of nonharmonic {F}ourier
  series}, Trans. Amer. Math. Soc. \textbf{72} (1952), 341--366. \MR{0047179
  (13,839a)}

\bibitem[HS05]{HalSzw05}
Rainis Haller and Ryszard Szwarc, \emph{Kaczmarz algorithm in {H}ilbert space},
  Studia Math. \textbf{169} (2005), no.~2, 123--132. \MR{2140451 (2006b:41049)}

\bibitem[JP98]{JP98}
Palle E.~T. Jorgensen and Steen Pedersen, \emph{Dense analytic subspaces in
  fractal {$L^2$}-spaces}, J. Anal. Math. \textbf{75} (1998), 185--228.
  \MR{1655831 (2000a:46045)}

\bibitem[Kac37]{Kacz37}
Stefan Kaczmarz, \emph{Angen\"{a}herte aufl\"{o}sung von systemen linearer
  gleichungen}, Bulletin International de l'Acad\'{e}mie Plonaise des Sciences
  et des Lettres. Classe des Sciences Math\'{e}matiques et Naturelles.
  S\'{e}rie A. Sciences Math\'{e}matiques \textbf{35} (1937), 355--357.

\bibitem[KM01]{KwMy01}
Stanis{\l}aw Kwapie{\'n} and Jan Mycielski, \emph{On the {K}aczmarz algorithm
  of approximation in infinite-dimensional spaces}, Studia Math. \textbf{148}
  (2001), no.~1, 75--86. \MR{1881441 (2003a:60102)}

\bibitem[Lai12]{Lai12}
Chun-Kit Lai, \emph{Spectral analysis on fractal measures and tiles}, Ph.d.
  dissertation, The Chinese University of Hong Kong, 2012, Published by
  ProQuest, LLC: UMI Dissertation Publishing, Number 3535691 (2013).

\bibitem[LO01]{LiOg01}
Shidong Li and Hidemitsu Ogawa, \emph{Pseudo-duals of frames with
  applications}, Appl. Comput. Harmon. Anal. \textbf{11} (2001), no.~2,
  289--304. \MR{1848709 (2002f:46014)}

\bibitem[Pol93]{Pol93}
A.~G. Poltoratski{\u\i}, \emph{Boundary behavior of pseudocontinuable
  functions}, Algebra i Analiz \textbf{5} (1993), no.~2, 189--210, English
  translation in St. Petersburg Math. 5:2 (1994): 389--406. \MR{1223178
  (94k:30090)}

\bibitem[RR16]{Riesz16}
Frigyes Riesz and Marcel Riesz, \emph{\"{U}ber die randwerte einer analytischen
  funktion}, Quatri\`{e}me Congr\`{e}s des Math\'{e}maticiens Scandinaves
  (1916), 27--44, See Rudin, \textit{Real and Complex Analysis, 3rd Edition}:
  pg. 341.

\bibitem[Str00]{Str00}
Robert~S. Strichartz, \emph{Mock {F}ourier series and transforms associated
  with certain {C}antor measures}, J. Anal. Math. \textbf{81} (2000), 209--238.
  \MR{1785282 (2001i:42009)}

\end{thebibliography}

\end{document}